\documentclass[12pt,leqno]{amsart} 
\usepackage{amsmath,amstext,amsthm,amssymb,amsxtra} 
\usepackage[bookmarksopen,bookmarksdepth=3,colorlinks,citecolor=red,pagebackref,hypertexnames=true]{hyperref} 
\usepackage[backrefs,msc-links,nobysame]{amsrefs}
\usepackage[normalem]{ulem}
 
\setcounter{tocdepth}{1}
\setcounter{secnumdepth}{2}

\usepackage{color}
\usepackage{array}

%also txfonts
\usepackage{txfonts} 
\usepackage[T1]{fontenc}
\usepackage{lmodern} 

\rmfamily

\usepackage{mathtools,MnSymbol} 
%\mathtoolsset{showonlyrefs,showmanualtags}

\setlength{\textwidth}{16.6cm} 
\setlength{\topmargin}{0cm} 
\setlength{\oddsidemargin}{0cm} 
\setlength{\evensidemargin}{0cm} 
\allowdisplaybreaks

%% Apparently, labels in figures have to go AFTER the caption.
\theoremstyle{plain}

\theoremstyle{plain}

% definition

\newtheorem{theorem}[equation]{Theorem}
\newtheorem*{theorem*}{Theorem}

\theoremstyle{definition}
\newtheorem{definition}[equation]{Definition}

\numberwithin{equation}{section}

% the arguments are deliminated with periods.
\def\norm#1.#2.{\lVert#1\rVert_{#2}}
\def\Norm#1.#2.{\bigl\lVert#1\bigr\rVert_{#2}}
\def\NOrm#1.#2.{\Bigl\lVert#1\Bigr\rVert_{#2}}
\def\NORm#1.#2.{\biggl\lVert#1\biggr\rVert_{#2}}
\def\NORM#1.#2.{\Biggl\lVert#1\Biggr\rVert_{#2}}

% special notations

\def\R{\mathbb R}

%averages

%%%%%%%
\def\vint_#1{\mathchoice%
      {\mathop{\kern 0.2em\vrule width 0.6em height 0.69678ex depth -0.58065ex
              \kern -0.8em \intop}\nolimits_{\kern -0.4em#1}}%
      {\mathop{\kern 0.1em\vrule width 0.5em height 0.69678ex depth -0.60387ex
              \kern -0.6em \intop}\nolimits_{#1}}%
      {\mathop{\kern 0.1em\vrule width 0.5em height 0.69678ex depth -0.60387ex
              \kern -0.6em \intop}\nolimits_{#1}}%
      {\mathop{\kern 0.1em\vrule width 0.5em height 0.69678ex depth -0.60387ex
              \kern -0.6em \intop}\nolimits_{#1}}}
\def\vintslides_#1{\mathchoice%
      {\mathop{\kern 0.1em\vrule width 0.5em height 0.697ex depth -0.581ex
              \kern -0.6em \intop}\nolimits_{\kern -0.4em#1}}%
      {\mathop{\kern 0.1em\vrule width 0.3em height 0.697ex depth -0.604ex
              \kern -0.4em \intop}\nolimits_{#1}}%
      {\mathop{\kern 0.1em\vrule width 0.3em height 0.697ex depth -0.604ex
              \kern -0.4em \intop}\nolimits_{#1}}%
      {\mathop{\kern 0.1em\vrule width 0.3em height 0.697ex depth -0.604ex
              \kern -0.4em \intop}\nolimits_{#1}}}

\newcommand{\aveint}[2]{\mathchoice%
      {\mathop{\kern 0.2em\vrule width 0.6em height 0.69678ex depth -0.58065ex
              \kern -0.8em \intop}\nolimits_{\kern -0.45em#1}^{#2}}%
      {\mathop{\kern 0.1em\vrule width 0.5em height 0.69678ex depth -0.60387ex
              \kern -0.6em \intop}\nolimits_{#1}^{#2}}%
      {\mathop{\kern 0.1em\vrule width 0.5em height 0.69678ex depth -0.60387ex
              \kern -0.6em \intop}\nolimits_{#1}^{#2}}%
      {\mathop{\kern 0.1em\vrule width 0.5em height 0.69678ex depth -0.60387ex
              \kern -0.6em \intop}\nolimits_{#1}^{#2}}}

%Operators

\DeclareMathOperator{\sgn}{sgn}
\DeclareMathOperator{\supp}{supp}
\DeclareMathOperator{\pv}{p.v.}

%  we do for innner product what we do for norm
%  above.  Use commas as argument deliminaters, so
%  that it's use is compatible with norms.
\def\ip#1,#2,{\langle #1,#2\rangle}
\def\Ip#1,#2,{\bigl\langle#1,#2\bigr\rangle}
\def\IP#1,#2,{\Bigl\langle#1,#2\Bigr\rangle}

%%%%%%%%%%%%

%%%%%%%%%%%%

%%%%%%%%%%  bold info
%
%  \bm{ \infty}  will make the symbol bold
%  \bm{x}  will make bold italic x
%  \mathbf{x}   will make bold roman x
%
%%%%%%%%%%%%%  fraktur font
%
%    \mathfrak{M}  for f
%
%
%%%%%%%%%%%%  alternate blackboard bold
%
%  \DeclareMathAlphabet\mathbb{U}{fplmbb}{m}{n}
%
%
%%%%%%%%%%%%  alternate script alphabet
%
%  \DeclareMathAlphabet\mathscr{T1}{hlcw}{m}{it}
%
%
%%%%%%%%%%%%%%%%%%%%%%%%%%%%%  Begin Document

\begin{document}
\raggedbottom
%%%%%%%%%%%%%%%%%%%%%%%%%%%%%  Title
 \title[BMO through endpoint commutator bounds]{A characterization of BMO in terms of endpoint bounds for commutators of singular integrals}
 
 \author[N. Accomazzo]{Natalia Accomazzo}
\address{Departamento de Matem\'aticas, Universidad del Pais Vasco, Aptdo. 644, 48080
Bilbao, Spain}

\subjclass[2010]{Primary 42B20, Secondary: 42B25}
\keywords{Hilbert transform, singular integrals, commutators, bounded mean oscillation}

\thanks{The author is supported by grant  MTM2014-53850 of the Ministerio de Econom\'ia y Competitividad (Spain), grant IT-641-13 of the Basque Government}
 
% \keywords{ } \subjclass[2010]{Primary: 42B25, Secondary: 42B35}
\begin{abstract} We provide a characterization of $\mathrm{BMO}$ in terms of endpoint boundedness of commutators of singular integrals. In particular, in one dimension, we show that $\|b\|_{\mathrm{BMO}}\eqsim B$, where $B$ is the best constant in the endpoint $L\log L$ modular estimate for the commutator $[H,b]$. We provide a similar characterization of the space $\mathrm{BMO}$ in terms of endpoint boundedness of higher order commutators of the Hilbert transform. In higher dimension we give the corresponding characterization of $\mathrm{BMO}$ in terms of the first order commutators of the Riesz transforms. We also show that these characterizations can be given in terms of commutators of more general singular integral operators of convolution type. 
\end{abstract}
\maketitle
%%%%%%%%%%%%%%%%%%%%%%%%%%%%%  Title

%%%%%%%%%%%%%%%%%%%%%%%%%%%%%% SECTION  SECTION SECTION
\section{Introduction} 

The main subject of this paper is commutators of singular integrals operators with appropriate symbols. In particular we are interested in characterizing the class of symbols for which these commutators are bounded. To be more precise, let $b$ be a locally integrable function on $\mathbb{R}^n$, which we call the symbol, and $T$ a Calder\'on Zygmund operator. For smooth functions we define the commutator operator $[b,T]$ as
\[[b,T]f\coloneqq T(bf)-bT(f).\]
In 1976, Coifman, Rochberg and Weiss proved that the commutator is a bounded map from $L^p(\mathbb{R}^n)$ onto itself $(1<p<\infty)$ if the symbol belongs to $\mathrm{BMO(\mathbb{R}^n)}$, \cite{MR0412721}. They also show that if all commutators $[R_j,b]$, $1\le j \le n$, with the Riesz transforms are bounded then necessarily $b\in\mathrm{BMO(\mathbb{R}^n)}$. In \cite{MR524754}, Janson improved this result by showing that it suffices to assume the boundedness of only one of  these commutators $[R_j,b]$. Commutators of more general  singular integral operators were considered 
by Uchiyama in \cites{MR0467384,Uchiyama2} and Li in \cite{MR1373281}. In the multiparameter case the corresponding results have been the subject of a long investigation and similar characterizations of $\mathrm{BMO(\mathbb{R}^n)}$ are available both for small $\mathrm{BMO(\mathbb{R}^n)}$, as well as for product $\mathrm{BMO(\mathbb{R}^n)}$, by means of suitable commutator bounds; see for example \cites{MR1785283,FergLacey,OuPetermichlStrouse}.

For the case $p=1$, the endpoint boundedness of commutators of singular integrals with $\mathrm{BMO(\mathbb{R}^n)}$ symbols was addressed by P\'erez in \cite{MR1317714}. More specifically he showed that the commutator of a singular integral with a $\mathrm{BMO(\mathbb{R}^n)}$ symbol is not bounded from $L^1(\mathbb{R}^n)$ onto $L^{1, \infty}(\mathbb{R}^n)$. However, they do satisfy the following modular inequality of the type $L\log L$ when $b \in \mathrm{BMO(\mathbb{R}^n)}$
\begin{equation}\label{LlogLendpoint}
|\{x \in \mathbb{R}^n:\, |[b,T]f(x)|>t\}|\le \int_{\mathbb{R}^n} \frac{\|b\|_{\mathrm{BMO(\mathbb{R}^n)}}|f(x)|}{t}\left(1+\log^+\left(\frac{\|b\|_{\mathrm{BMO(\mathbb{R}^n)}}|f(x)|}{t}\right)\right) dx.
\end{equation}
 This estimate reflects that these commutators are more singular operators than Calder\'on Zygmund operators.   
Estimates like these are interesting since they serve as endpoint to interpolate. The original proof of \eqref{LlogLendpoint} is based on a good-$\lambda$ type argument relating 
these commutators with $M^2=M\circ M$, the iteration of the maximal function. Actually $M^2$ satisfies a similar  $L\log L$ modular inequality. 
A different proof of this Theorem was given by P\'erez and Pradolini in \cite{MR1827073}. 

 The main purpose of this work is to show the necessity of the $\mathrm{BMO(\mathbb{R}^n)}$ assumption for the boundedness of commutators of singular integrals at the endpoint. First, we investigate the simplest case, namely if we assume that the commutator with the Hilbert transform satisfies the modular $L\log L$ endpoint inequality then the symbol $b$ has to be a $\mathrm{BMO(\mathbb{R})}$ function. We obtain the following theorem: 
\begin{theorem}\label{HT}
Let $b \in L^1_{\mathrm{loc}}(\mathbb{R})$ and $H$ be the Hilbert transform. If we have that there exists a constant $B$ such that
\[|\{x \in \mathbb{R}:\, |[H,b]f(x)|>t\}|\le \int_{\mathbb{R}} \frac{B|f(x)|}{t}\left(1+\log^+\left(\frac{B|f(x)|}{t}\right)\right) dx\]
for all $t>0$ and $f$, then $b \in \mathrm{BMO(\mathbb{R})}$ and there is a universal constant $c$ such that $\|b\|_{\mathrm{BMO(\mathbb{R})}}\leq c B$. 
\end{theorem}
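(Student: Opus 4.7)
The plan is to adapt the Coifman--Rochberg--Weiss/Janson test-function method to the endpoint modular setting. By the translation and dilation invariance of $H$ and of BMO, it suffices to bound the mean oscillation of $b$ on the single interval $I = (-1,1)$.

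Take $\tilde I = (3,5)$, a disjoint shifted copy of $I$; on $I \times \tilde I$ one has $y - x \in (2, 6)$, so $\tfrac{1}{x-y}$ has constant sign and magnitude comparable to $1$. Letting $m$ denote the median of $b$ on $\tilde I$, test against $f := \sgn(b - m)\,\chi_{\tilde I}$ (with $\|f\|_\infty \le 1$, $\supp f \subset \tilde I$, and $(b - m)f = |b - m|$ on $\tilde I$). For $x \in I$, direct expansion yields
\[
[H,b]f(x) = (b(x) - m)\,Hf(x) + \int_{\tilde I}\frac{|b(y) - m|}{y - x}\,dy.
\]
Writing $L := \int_{\tilde I}|b - m|$, the bounds $|Hf(x)| \le 1$ and $\int_{\tilde I}\tfrac{|b-m|}{y-x}\,dy \ge L/6$ give the pointwise lower bound
\[
L/6 \le |[H,b]f(x)| + |b(x) - m|, \qquad x \in I.
\]

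The endpoint modular hypothesis applied to $f$ gives $|\{x : |[H,b]f(x)| > t\}| \le |\tilde I|\,\Phi(B/t)$ with $\Phi(s) = s(1 + \log^+ s)$. Choosing $t \sim L/12$ and combining with the pointwise inequality forces $|b(x) - m| \ge L/12$ on a subset of $I$ of measure at least $|I|\bigl(1 - O(B/L)\bigr)$, and hence $\int_I|b - m| \gtrsim L - O(B)$ whenever $L \gtrsim B$.

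The main obstacle is the \emph{direction} of this comparison: the argument above only produces a LOWER bound for $\int_I |b - m|$ in terms of $L$, whereas the BMO seminorm calls for an UPPER bound. I would overcome this by testing also against the unsigned choice $f' = \chi_{\tilde I}$, for which $|H\chi_{\tilde I}(x)| \ge \ln(5/3)$ on $I$; writing the commutator as $(b(x)-m)H\chi_{\tilde I}(x) - \int_{\tilde I}\tfrac{b(y)-m}{x-y}\,dy$ and solving yields the complementary pointwise UPPER bound
\[
|b(x) - m| \lesssim |[H,b]\chi_{\tilde I}(x)| + L, \qquad x \in I.
\]
The modular hypothesis then produces a $1/\lambda$-type distributional bound for $|b - m|$ on $I$. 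The delicate technical step is to upgrade this $1/\lambda$ decay to exponential decay: by combining the LOWER bound (from the signed test function) with the weak UPPER distributional bound (from the unsigned one), and running the symmetric argument with the roles of $I$ and $\tilde I$ interchanged, a bootstrap of John--Nirenberg type should close the loop to deliver $\|b\|_{\mathrm{BMO}} \lesssim B$. The careful calibration of this iteration is where I expect the principal technical difficulty of the proof to lie.
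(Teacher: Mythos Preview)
Your proposal has a genuine gap: the circular dependence on $L$ cannot be closed by the mechanism you describe. Every inequality you derive using the disjoint test interval $\tilde I$ carries the term $L=\int_{\tilde I}|b-m|$ on the right-hand side; after taking the supremum over $I$ (and hence over the associated $\tilde I$) you obtain an estimate of the shape $\|b\|_{\mathrm{BMO}}\le C_1\|b\|_{\mathrm{BMO}}+C_2 B$ with no control on $C_1$, and no a priori finiteness of $\|b\|_{\mathrm{BMO}}$ to absorb with. The ``John--Nirenberg bootstrap'' you allude to is not a rescue: the John--Nirenberg iteration \emph{presupposes} the BMO bound on all subintervals, which is exactly the conclusion you are after. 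Your lower bound from the signed test function only says that large oscillation on $\tilde I$ forces large oscillation on $I$; this goes the wrong way and does not combine with the upper bound to produce decay.

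The paper avoids this circularity by never leaving the interval $I$. Using the median $m_I(b)$, split $I=E_+\cup E_-$ with $b\ge m_I(b)$ on $E_+$ and $b\le m_I(b)$ on $E_-$, $|E_\pm|=|I|/2$. For $x\in E_+$ and $y\in E_-$ one has $0\le b(x)-m_I(b)\le b(x)-b(y)$, so averaging in $y$ over $E_-$ and inserting $1=\dfrac{(x-c_I)+(c_I-y)}{x-y}$ gives the \emph{pointwise} bound
\[
|b(x)-m_I(b)|\,\chi_I(x)\ \lesssim\ \frac{|x-c_I|}{|I|}\,|H_b^1(\chi_{E_\mp})(x)|+\Bigl|H_b^1\Bigl(\tfrac{(\cdot-c_I)}{|I|}\chi_{E_\mp}\Bigr)(x)\Bigr|,
\]
with bounded test functions supported in $I$ and \emph{no oscillation term} on the right. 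One application of the modular hypothesis at level $A\sim B$ then yields $|\{x\in I:|b(x)-m_I(b)|>A\}|\le \tfrac12|I|$, and Str\"omberg's theorem finishes. The key idea you are missing is this same-interval median splitting together with the $\dfrac{x-y}{x-y}$ trick, which converts the problem into a direct level-set estimate rather than a comparison between oscillations on different intervals.
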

Here and throughout the paper we define $\log^+t\coloneqq \max(\log t,0)$.

We will not prove this theorem, as it will follow from the corresponding result for  higher order commutators of the Hilbert transform included in Section \ref{dimOne}.

In Section \ref{HighDim} we will address the same question but in higher dimensions. We will consider more general singular integral operators, 
\[Tf(x)\coloneqq \pv \int_{\mathbb{R}^n} \frac{\Omega(x-y)}{|x-y|^n} f(y) dy,\]
where we will impose conditions on the kernel $\Omega$ similar as in the paper of Uchiyama, \cite{MR0467384}. In particular, we obtain the same result assuming that the commutator with one Riesz transform satisfies the $L \log L$ endpoint inequality.

As we mentioned before, there are examples of commutators of Calder\'on Zygmund operators and BMO functions that fail to be of weak type $(1,1)$. However, it is not hard to see that if we take the symbol to be an $L^{\infty}$-function instead, then we get that the commutator $[b,T]$ is a bounded map from $L^1(\mathbb{R}^n)$ to $L^{1,\infty}(\mathbb{R}^n)$. In section~\ref{HighDim} we prove that the condition $b \in L^{\infty}(\mathbb{R}^n)$ is also necessary, by assuming that the commutator $[b,T]$ of a locally integrable symbol $b$ and a singular integral operator (with the same conditions on the kernel that we considered above) is of weak type $(1,1)$.

\section{Dimension one}\label{dimOne}

\subsection{BMO}
We say that a function $b \in L^1_{\mathrm{loc}}(\mathbb{R}^n)$ belongs to the class $\mathrm{BMO(\mathbb{R}^n)}$ if 
\[\|b\|_{\mathrm{BMO(\mathbb{R}^n)}}\coloneqq \sup_Q \frac{1}{|Q|} \int_Q |b(x) -b_Q| dx < \infty\]
where the supremum is taken over all the cubes with edges parallel to the axes and $b_Q$ denotes the usual average of $b$ over $Q$, namely $b_Q=\frac{1}{|Q|}\int_Qb(x)\,dx$. In this space, we have an equivalent norm, defined by
\[\|b\|'_{\mathrm{BMO(\mathbb{R}^n)}}\coloneqq  \sup_Q \inf_c \frac{1}{|Q|}\int_Q |b(x)-c| dx.\]
For a cube $Q$, the infimum above is attained and the constants where this happens can be found among the \textit{median values}.
\begin{definition}
 A \textit{median value} of $b$ over a cube $Q$ will be any real number $m_Q(b)$ that satisfies simultaneously 
\[|\{ x \in Q: b(x)> m_b(Q)\}|\le \frac{1}{2} |Q|\]
and
\[|\{x \in Q:\, b(x)<m_b(Q)\}|\le \frac{1}{2}|Q|.\]
\end{definition}
The fact that the constant $c$ in the definition of $\|b\|'_{\mathrm{BMO(\mathbb{R}^n)}}$ can be chosen to be a median value of $b$ can be found for instance in \cite[Ch. 8, p. 199]{MR869816}.

An equivalent description of $\mathrm{BMO(\mathbb{R}^n)}$ was obtained  by John \cite{MR0190905} and by Str\"omberg \cite{MR529683}. These authors considered the following quantities for $0<s<1$ and $b$ measurable
\[\|b\|_{\mathrm{BMO}_s} \coloneqq  \sup_Q \inf_c \inf \{t\ge 0:\, |\{x \in Q:\, |b(x)-c|>t\}|\le s |Q|\}\]
and proved that $\|b\|_{\mathrm{BMO}_{s}}$ is equivalent to the usual $\mathrm{BMO(\mathbb{R}^n)}$-norm for $0<s\le 1/2$. Here we will understand that $\mathrm{BMO_s} \equiv \mathrm{BMO_s(\mathbb{R}^n)}$, we omit the dimension to simplify notation. They obtained the following more precise estimates.
\begin{theorem}[Str\"omberg, \cite{MR529683}]\label{thmStromberg}
For $0<s\le 1/2$ there exists a constant $C$ depending only on $n$ such that
\[s\|b\|_{\mathrm{BMO}_{s}}\le \|b\|_{\mathrm{BMO}}\le C\|b\|_{\mathrm{BMO}_{s}}.\]
\end{theorem}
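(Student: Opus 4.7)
The plan is to prove the two inequalities separately.

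For the lower bound $s\|b\|_{\mathrm{BMO}_s} \leq \|b\|_{\mathrm{BMO}}$, I would argue by Chebyshev's inequality: for any cube $Q$ and any $t > 0$,
\[
|\{x \in Q : |b(x) - b_Q| > t\}| \leq \frac{1}{t}\int_Q |b(x) - b_Q|\,dx \leq \frac{|Q|}{t}\|b\|_{\mathrm{BMO}}.
\]
Requiring the right-hand side to be at most $s|Q|$ forces $t \geq s^{-1}\|b\|_{\mathrm{BMO}}$. Taking the infimum over admissible $t$ with $c = b_Q$, the infimum over $c$ (which can only decrease the bound), and the supremum over $Q$, we obtain $s\|b\|_{\mathrm{BMO}_s} \leq \|b\|_{\mathrm{BMO}}$.

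For the upper bound $\|b\|_{\mathrm{BMO}} \leq C\|b\|_{\mathrm{BMO}_s}$, set $\lambda := \|b\|_{\mathrm{BMO}_s}$. Fix a cube $Q$ and pick a constant $c_Q$ (admissible in the definition of $\|b\|'_{\mathrm{BMO}}$) such that the set $E := \{x \in Q : |b(x) - c_Q| > \lambda\}$ satisfies $|E| \leq s|Q|$. Since $c_Q$ is admissible in the equivalent $\|\cdot\|'_{\mathrm{BMO}}$ norm, it suffices to show that $\tfrac{1}{|Q|}\int_Q |b - c_Q|\,dx \leq C\lambda$. The heart of the matter is to establish geometric decay of the level sets of $|b - c_Q|$ on $Q$ beyond the scale $\lambda$. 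I would achieve this by a Calder\'on--Zygmund stopping argument: decompose $E$ into maximal dyadic subcubes $\{Q_j\}$ at an appropriate threshold $\alpha$, giving $\sum_j |Q_j| \leq (s/\alpha)|Q|$ and $|b - c_Q| \leq \lambda$ a.e.\ on $Q \setminus \bigcup_j Q_j$. On each $Q_j$, replace $c_Q$ by the constant $c_{Q_j}$ supplied by the $\mathrm{BMO}_s$ condition; the maximality of the stopping, combined with a suitable choice of $\alpha$ relative to the dimension, forces the good sets of $c_Q$ (in the parent of $Q_j$) and of $c_{Q_j}$ (in $Q_j$) to intersect with positive measure, which yields $|c_{Q_j} - c_Q| \leq 2\lambda$. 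Iterating the stopping inside each $Q_j$ produces a tree of cubes whose measures decay geometrically, and summing over the tree gives the desired bound. To close the loop without circularity, I would first truncate $b$ to be bounded (so the quantity being bounded is a priori finite) and then pass to the limit.

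The main obstacle is the dimensional bookkeeping at the Calder\'on--Zygmund step. A naive dyadic iteration only guarantees the overlap of good sets when $s < 2^{-n}$, so that the drift $|c_{Q_j} - c_Q|$ is uniformly controlled. Extending the result to the full range $s \leq 1/2$ with a constant $C$ depending only on $n$ (as stated) is more delicate: it requires either a sharper, non-dyadic Whitney-type stopping, or exploiting that for $s = 1/2$ the constant $c_Q$ can be chosen to be a median value (so that both tails of $b - c_Q$ are simultaneously small), which restores the overlap argument. Arranging the geometric series so that the final constant depends only on $n$ and not on $s$ is the subtle part; the rest of the argument is standard real-variable analysis.
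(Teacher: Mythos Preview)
The paper does not contain a proof of this theorem: it is stated as Theorem~\ref{thmStromberg} with attribution to Str\"omberg~\cite{MR529683} and used as a black box. There is therefore no ``paper's own proof'' to compare your proposal against.

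That said, your sketch follows the classical John--Str\"omberg route and is essentially sound. The Chebyshev argument for the lower bound is correct (your phrasing ``forces $t \geq s^{-1}\|b\|_{\mathrm{BMO}}$'' is slightly off---you mean that $t=s^{-1}\|b\|_{\mathrm{BMO}}$ is admissible, hence the infimum is at most this value). For the upper bound, the stopping-time argument you outline is the standard one; you are right that the naive dyadic version only works cleanly for $s<2^{-n}$, and that extending to $s\le 1/2$ requires the median-value refinement (which is precisely Str\"omberg's contribution over John's earlier result). If you want to write this up in full, the cleanest route is to take $c_Q$ to be a median from the start and exploit the two-sided control this gives; this is how the range $s\le 1/2$ is achieved with a dimensional constant.
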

For these ``norms" it will be also useful to replace the general constant $c$ by the median $m_Q(b)$. Thus, to prove that a function $b$ belongs to $\mathrm{BMO}$ it will be enough to find constants $A$ and $s$ ($0<s\le 1/2$) such that, for every cube $Q$ we have
\[|\{ x \in Q:\, |b(x)-m_Q(b)|>A\}|\le s |Q|.\]
Then we also have that $\|b\|_{\mathrm{BMO}}\lesssim_{n,s} A$. 

\subsection{Higher order commutators}
The \textit{commutator of order k} for $k=2,3, \dots$ is defined by the recursive formula $T^k_b\coloneqq[T_b^{k-1},b]$. For $k=1$ we define $T^1_b$ as the usual commutator $T^1_b\coloneqq[b,T]$. For $T$ a Calder\'on-Zygmund operator and $b \in \mathrm{BMO(\mathbb{R}^n)}$, we have the following estimate
\[
|\{x \in \mathbb{R}^n:\, |T^k_bf(x)|>t\}|\le \int_{\mathbb{R}^n} \phi_k\left(\frac{\|b\|_{\mathrm{BMO(\mathbb{R}^n)}}^k|f(x)|}{t}\right) dx
\]
for every smooth function with compact support $f$ and $t>0$; here the function $\phi_k$ is defined by $\phi_k(t)\coloneqq t(1+\log^+(t))^k$. 

Consider the Hilbert transform, defined by
\[Hf(x)\coloneqq \pv \int_{\mathbb{R}} \frac{f(y)}{x-y} dy.\]
It is not difficult to see that in this case we can define the $k$-order commutator of the Hilbert transform via the formula
\[H^k_b f(x)= \pv \int_{\mathbb{R}} \frac{(b(x)-b(y))^k}{x-y} f(y) dy.\]

The following theorem, in combination with the results from \cite{MR1317714}, gives a characterization of the space $\mathrm{BMO(\mathbb{R}^n)}$ in terms of endpoint boundedness of higher order commutators of the Hilbert transform.

%%%%%%%%%%%%%%%%%%%%%%%%%%%%%% THEOREM THEOREM THEOREM
\begin{theorem}\label{BMOnecessityHilbert}
Let $b \in L^1_{\mathrm{loc}}(\mathbb{R})$. If there exists a constant $B$ and a positive integer $k$ such that we have the following estimate
\[|\{x \in \mathbb{R}:\, |H^k_bf(x)|>t\}|\le \int_{\mathbb{R}} \phi_k\left(\frac{B|f(x)|}{t}\right) dx,\]
then $b \in \mathrm{BMO(\mathbb{R})}$ and $\|b\|_{\mathrm{BMO(\mathbb{R})}} \lesssim B^{1/k}$. 
\end{theorem}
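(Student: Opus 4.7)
The plan is to use the Str\"omberg characterization recorded in Theorem \ref{thmStromberg}: to conclude $\|b\|_{\mathrm{BMO}} \lesssim B^{1/k}$, it is enough to produce, for every interval $Q \subset \mathbb{R}$, a constant $c_Q \in \mathbb{R}$ and a level $A = O(B^{1/k})$ such that $|\{x \in Q \mid |b(x) - c_Q| > A\}| \le |Q|/2$, at which point Theorem \ref{thmStromberg} with $s=1/2$ finishes the job. Crucially, the Str\"omberg quantity involves an infimum over $c_Q$, so this constant need not equal the median of $b$ on $Q$; we will exploit this freedom by taking $c_Q$ to be the median of $b$ on an \emph{adjacent} interval $\tilde Q$, which is precisely what freezes the sign of the kernel $1/(x-y)$ on $Q \times \tilde Q$.

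Fix $Q$ of length $\ell$, let $\tilde Q$ be the interval of length $\ell$ immediately to the right of $Q$, and set $m \coloneqq m_{\tilde Q}(b)$. Split $\tilde Q = E_+ \cup E_-$ according to whether $b \ge m$ or $b \le m$; by definition of the median, $|E_\pm| \ge \ell/2$. Test the hypothesis with $f = \chi_{E_-}$: for $x \in Q$ with $b(x) > m + A$ and $y \in E_-$ one has $b(x) - b(y) \ge A > 0$, while $x - y < 0$ since $\tilde Q$ lies to the right of $Q$, and no principal value is needed because the two intervals are disjoint up to a single point. The integrand of
\[H^k_b \chi_{E_-}(x) = \int_{E_-} \frac{(b(x) - b(y))^k}{x - y}\, dy\]
therefore has constant sign for both parities of $k$, and combined with $|x - y| \le 2\ell$ it yields the pointwise lower bound $|H^k_b \chi_{E_-}(x)| \ge A^k\,|E_-|/(2\ell) \ge A^k/4$ on the set $\{x \in Q \mid b(x) > m + A\}$.

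Applying the assumed modular inequality at level $t = A^k/8$ to $f = \chi_{E_-}$ then gives
\[|\{x \in Q \mid b(x) > m + A\}| \le \phi_k(8B/A^k)\,|E_-| \le \phi_k(8B/A^k)\,\ell,\]
so taking $A = C B^{1/k}$ with $C = C(k)$ large enough that $\phi_k(8/C^k) \le 1/4$ bounds the left side by $\ell/4$. The symmetric argument with $E_+$ in place of $E_-$ and $b(x) < m - A$ in place of $b(x) > m + A$ handles the lower tail; summing the two estimates yields $|\{x \in Q \mid |b(x) - m| > CB^{1/k}\}| \le \ell/2$, which is exactly the Str\"omberg input and closes the argument. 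The step that needs the most care is the no-cancellation lower bound on $|H^k_b \chi_{E_-}(x)|$: simultaneously freezing the signs of $x - y$ and of $(b(x) - b(y))^k$ is what the pairing ``adjacent interval $\tilde Q$ together with the sublevel set $E_-$'' is designed to achieve, and in higher dimensions or for more general kernels this uniform-sign property is no longer automatic but must be engineered from Uchiyama-type hypotheses on $\Omega$ (as will be done in Section \ref{HighDim}).
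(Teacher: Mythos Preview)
Your proof is correct, and it takes a genuinely different route from the paper's argument.

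The paper works on a single interval $I$, uses the median $m_I(b)$ on that same interval, and splits $I=E_+\cup E_-$ according to the sign of $b-m_I(b)$. Because both $x$ and $y$ then lie in $I$, the kernel $1/(x-y)$ has no fixed sign, and the paper compensates by writing $x-y=(x-c_I)+(c_I-y)$; this produces four terms of the form $H^k_b(\chi_{E_\pm})$ and $H^k_b\big((\cdot-c_I)/|I|\,\chi_{E_\mp}\big)$, each estimated separately via the modular hypothesis with $A=32B$. Your approach replaces this kernel-splitting trick by a geometric one: you offset the test interval, taking $\tilde Q$ adjacent to $Q$ and using $c_Q=m_{\tilde Q}(b)$, so that $x-y$ is automatically of one sign on $Q\times\tilde Q$ and the integrand in $H^k_b\chi_{E_\pm}(x)$ has no cancellation. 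This yields only two terms, no auxiliary functions beyond characteristic functions, and the same constant $A\eqsim B^{1/k}$ via Str\"omberg with $s=1/2$. The cost is that the centering constant is the median on the neighbor rather than on $Q$ itself, but since the Str\"omberg norm takes an infimum over $c$ this is immaterial. In short, the paper localizes and then algebraically separates the kernel; you separate the supports from the outset, which is more elementary here but, as you note, is precisely the step that fails for general kernels in higher dimensions and must be replaced by the Uchiyama-type cone argument of Section~\ref{HighDim}.
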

%%%%%%%%%%%%%%%%%%%%%%%%%%%%%% THEOREM THEOREM THEOREM

%%%%%%%%%%%%%%%%%%%%%%%%%%%%%% PROOF PROOF PROOF
\begin{proof} 
By \ref{thmStromberg}, it is enough to find a constant $A$ such that for every interval $I$,
\[|\{x \in I:\, |b(x)-m_I(b)|^k>A\}| \le \frac{1}{2} |I|,\]
where $m_I(b)$ is a median of $b$ on $I$. 

Fix $I$ an interval. We can find disjoint subsets of $I$, $E_+$ and $E_-$ such that $|E_+|=|E_-|=|I|/2$, 
\[\begin{split}
E_+\subset \{y \in I:\, b(y)\ge m_I(b)\}\\
E_-\subset \{y \in I:\, b(y)\le m_I(b)\}.
\end{split}\]
Then,\[|b(x)-m_I(b)|^k\chi_I(x) = (b(x)-m_I(b))^k \chi_{E_+}(x) + (m_I(b)-b(x))^k \chi_{E_-}(x).\]
For $y \in E_-$ and $z \in E_+$ we have
\[|b(x)-m_I(b)|^k\chi_I(x) \le (b(x)-b(y))^k \chi_{E_+}(x) + (b(z)-b(x))^k \chi_{E_-}(x).\]
Now integrating for $y \in E_-$ and $z \in E_+$ and calling $c_I$ the center of $I$, we get
\begin{flalign*}
&|b(x)-m_I(b)|^k \chi_I(x)\le \frac{1}{|E_-|}\int_{E_-} (b(x)-b(y))^k \chi_{E_+}(x) dy + \frac{1}{|E_+|}\int_{E_+}  (b(x)-b(z))^k \chi_{E_-}(x) dz.
\end{flalign*}
The first summand in the right hand side of the estimate above can be bounded above by
 \[
 \begin{split}
  \frac{1}{|E_-|}\int_{E_-}(b(x)-b(y))^k \chi_{E_+}(x) dy &\leq \frac{1}{|E_-|}\int_{\mathbb{R}} \frac{(b(x)-b(y))^k}{x-y}(x-c_I)\chi_{E_+}(x)\chi_{E_-}(y) dy 
   \\
 &\qquad +\frac{1}{|E_-|} \int_{\mathbb{R}} \frac{(b(x)-b(y))^k}{x-y}(c_I-y)\chi_{E_+}(x)\chi_{E_-}(y)  dy  
 \\
 &\leq 2\frac{|x-c_I|}{|I|} |H^k(\chi_{E_+})(x)|  +  2\left|H^k\left(\frac{(\cdot - c_I)}{|I|}\chi_{E_-}\right)(x)\right|.
   \end{split}
\]
Using a similar estimate for the second summand we get
\[\begin{split}
&|\{x \in I:\, |b(x)-m_I(b)|^k> A\}|\le |\{x \in \mathbb{R}:\, |H^k(\chi_{E_+})(x)|>A/8\}|\\
&+|\{x \in \mathbb{R}:\, |H^k((\cdot - c_I)/|I|\chi_{E_-}(x)>A/8\}|\\
&+|\{x \in \mathbb{R}:\, |H^k(\chi_{E_-})(x)|>A/8\}|+ |\{x \in \mathbb{R}:\, |H^k((\cdot - c_I)/|I|\chi_{E_+})(x)|>A/8\}|\\
&\eqqcolon  (i)+(ii)+(iii)+(iv).
\end{split}\]
We show the estimate for $(i)$. The estimates for the other terms are similar. 
\[
(i)\le \int_{\mathbb{R}} \chi_{E_+}(x)\frac{8B}{A} \left(1+\log^+\left(\chi_{E_+}(x)\frac{8B}{A}\right)\right)^k dx\le |E_+|\frac{1}{4}=\frac{|I|}{8},\]
if we choose $A=32B$. 
Summing, 
\[|\{x \in I:\, |b(x)-m_I(b)|^k>A\}| \le \frac{1}{2} |I|\]
as we wanted.\end{proof}
%%%%%%%%%%%%%%%%%%%%%%%%%%%%%% PROOF PROOF PROOF
\section{Higher dimensions}\label{HighDim}
For this section we will be considering operators $T$ of the form
\begin{equation}\label{eq:generalsi}
	Tf(x)\coloneqq \pv \int_{\mathbb{R}^n} \frac{\Omega(x-y)}{|x-y|^n}f(y) dy,
\end{equation}
where $\Omega\in \mathrm{Lip}(S^{n-1})$ is homogeneous of degree zero, satisfies $\int_{S^{n-1}} \Omega =0$, and the set $\{\Omega (x)= 0\}$ has zero measure. An important class of operators that satisfies these conditions are the Riesz transforms, 
\[R_jf(x) \coloneqq \pv \int_{\mathbb{R}^n} \frac{x_j-y_j}{|x-y|^{n+1}} f(y) dy.\]

First we prove that if the commutator of one of these operators with a symbol $b$ is weak $(1,1)$ then $b$ must satisfy a stronger condition than $\mathrm{BMO(\mathbb{R}^n)}$, namely $b \in L^{\infty}(\mathbb{R}^n)$. 
\begin{theorem}\label{weak(1,1)}
Let $b$ be a locally integrable function and suppose $[b,T]:\, L^1(\mathbb{R}^n) \to L^{1, \infty}(\mathbb{R}^n)$ is bounded. Then $b \in L^{\infty}(\mathbb{R}^n)$ and we have  the bound $\|b\|_{\infty} \le C(\Omega, n)\|[b,T]\|_{L^1 \to L^{1,\infty}}$.  
\end{theorem}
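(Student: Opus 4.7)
The plan is to recover the symbol $b$ pointwise from the action of $[b,T]$ on approximate identities, and then to turn the resulting pointwise identity into an oscillation estimate for $b$ via the weak-type hypothesis. Fix a Lebesgue point $y_0$ of $b$ and consider the normalized bumps $f_r:=|B(y_0,r)|^{-1}\chi_{B(y_0,r)}$, so $\|f_r\|_1=1$. For any $x\ne y_0$, the kernel $y\mapsto \Omega(x-y)/|x-y|^n$ is continuous near $y_0$, so Lebesgue differentiation yields, as $r\to 0$, $Tf_r(x)\to \Omega(x-y_0)/|x-y_0|^n$ and $T(bf_r)(x)\to b(y_0)\Omega(x-y_0)/|x-y_0|^n$. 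Consequently,
\[
[b,T]f_r(x)=b(x)Tf_r(x)-T(bf_r)(x)\xrightarrow[r\to 0]{}(b(x)-b(y_0))\,\frac{\Omega(x-y_0)}{|x-y_0|^n}=:g_{y_0}(x)
\]
pointwise a.e. The hypothesis gives $\|[b,T]f_r\|_{L^{1,\infty}}\le\|[b,T]\|_{L^1\to L^{1,\infty}}$ uniformly in $r$, and lower semicontinuity of the $L^{1,\infty}$ quasinorm under a.e.\ convergence (a short Fatou argument) transfers the bound to the limit: $\|g_{y_0}\|_{L^{1,\infty}}\le\|[b,T]\|_{L^1\to L^{1,\infty}}$.

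Next I localize using the structure of $\Omega$. Because $\Omega$ is continuous on $S^{n-1}$, has mean zero, and vanishes only on a null set, there exist an open cap $U\subset S^{n-1}$ and $c_0>0$ with $|\Omega|\ge c_0$ on $U$. Let $\Gamma$ be the cone through $U$ and set $\Gamma_{y_0,s}:=y_0+(\Gamma\cap\{s\le|z|\le 2s\})$, which has Lebesgue measure $c_1 s^n$ for some $c_1>0$. Since $|g_{y_0}(x)|\ge c_0(2s)^{-n}|b(x)-b(y_0)|$ on $\Gamma_{y_0,s}$, the weak-$L^1$ bound on $g_{y_0}$ gives
\[
\Abs{\{x\in\Gamma_{y_0,s}:\,|b(x)-b(y_0)|>A\}}\le\frac{(2s)^n\|[b,T]\|_{L^1\to L^{1,\infty}}}{c_0 A}.
\]
Choosing $A:=C_0\|[b,T]\|_{L^1\to L^{1,\infty}}$ for a sufficiently large constant $C_0=C_0(c_0,c_1,n)$ forces the exceptional set to have measure at most $|\Gamma_{y_0,s}|/4$, so the good set $G(y_0,s):=\{x\in\Gamma_{y_0,s}:|b(x)-b(y_0)|\le A\}$ fills at least three quarters of $\Gamma_{y_0,s}$.

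Finally, to pass to a global oscillation bound, take any two Lebesgue points $y_0,y_0'$ of $b$ and choose $s\gg |y_0-y_0'|$. Since $\Gamma_{y_0,s}$ and $\Gamma_{y_0',s}$ differ by the translation $y_0'-y_0$, their symmetric difference has measure $o(s^n)$, so $G(y_0,s)\cap G(y_0',s)$ is nonempty for $s$ large; any $x$ in the intersection gives $|b(y_0)-b(y_0')|\le 2A$ by the triangle inequality. Since a.e.\ point is a Lebesgue point of $b$, the essential oscillation of $b$ is bounded by $2A=C\|[b,T]\|_{L^1\to L^{1,\infty}}$. Using the invariance $[b+c,T]=[b,T]$ one normalizes $b$ by subtracting its value at a fixed Lebesgue point, yielding $\|b\|_\infty\le C(\Omega,n)\|[b,T]\|_{L^1\to L^{1,\infty}}$. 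The main technical hurdle is the first step: justifying both the pointwise convergence $T(bf_r)(x)\to b(y_0)\Omega(x-y_0)/|x-y_0|^n$ at $x\ne y_0$ (via local integrability of $b$ combined with continuity of the kernel off the diagonal) and the lower semicontinuity that carries the weak-$L^1$ bound over to $g_{y_0}$.
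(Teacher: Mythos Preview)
Your proof is correct and shares its opening with the paper's argument: both test the weak-type hypothesis on approximate identities centered at a Lebesgue point $y_0$, pass to the limit via a Fatou argument to obtain
\[
\Bigl|\Bigl\{x:\,\frac{|b(x)-b(y_0)|}{|x-y_0|^n}\,|\Omega(x-y_0)|>\lambda\Bigr\}\Bigr|\le \frac{\|[b,T]\|_{L^1\to L^{1,\infty}}}{\lambda},
\]
and then localize to directions where $|\Omega|\ge c_0>0$. The divergence is in how this weak-type control is turned into an $L^\infty$ bound. The paper fixes a single reference point (normalizing $b(0)=0$), supposes $|b(y)|>2^n$ at some Lebesgue point $y$, and runs a two-scale argument: on the ball $B\bigl(y,\tfrac12|y|\,|b(y)|^{1/n}\bigr)$ intersected with the good cone the estimate centered at $y$ forces $|b(x)|\gtrsim |b(y)|$ off a small exceptional set, while the estimate centered at $0$ forces $\{x:\,|b(x)|/|x|^n>|y|^{-n}\}$ to have small measure; comparing the two upper and lower bounds for the same set yields $|b(y)|\le C(\Omega,n)$. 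Your route is more symmetric: for each Lebesgue point $y_0$ and each scale $s$ you produce a ``good'' subset of the conical annulus $\Gamma_{y_0,s}$ of relative measure at least $3/4$ on which $|b-b(y_0)|\le A$, and then let $s\to\infty$ so that, by the scaling $\Gamma_{y_0,s}-y_0=s\bigl(\Gamma\cap\{1\le|z|\le2\}\bigr)$ together with $L^1$-continuity of translation, the translated annuli for two points $y_0,y_0'$ have symmetric difference $o(s^n)$ and the two good sets must intersect. This overlap-at-large-scale trick avoids the delicate choice of radius $\tfrac12|y|\,|b(y)|^{1/n}$ in the paper and makes the dependence on $\Omega$ and $n$ more transparent; the paper's argument, by contrast, is carried out at a single finite scale and is somewhat more self-contained quantitatively.
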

\begin{proof} We begin by fixing a locally integrable function $b$. Note that this assumption implies that $b$ is finite almost everywhere, and that almost every point $y\in\R^n$ is a Lebesgue point of $b$.
Now recall that
\[ [b,T]f(x)= \pv \int_{\mathbb{R}^n} \frac{b(x)-b(y)}{|x-y|^n} \Omega(x-y) f(y) dy.\]
By renormalizing $b$, we can assume $\|[b,T]\|_{L^1 \to L^{1, \infty}}=1$.Take $f$ to be a $C^{\infty}$ function with compact support, even, $\supp f \subset B(0,1)$, $\int f =1$, and $0\le f \le 1$. For every $\varepsilon >0$, set $f_{\varepsilon} (x)\coloneqq \frac{1}{\varepsilon^n} f\left(\frac{x}{\varepsilon}\right)$ and $f^y_{\varepsilon}(x)\coloneqq f_{\varepsilon}(y-x)$. Then, whenever $y$ is a Lebesgue point of $b$, we have
\[\lim_{\varepsilon \to 0} |[b,T]f^y_{\varepsilon} (x)| = \frac{|b(x)-b(y)|}{|x-y|^n}|\Omega(x-y)|.\]
So we get that, for every $\lambda>0$ and $y$ a Lebesgue point for $b$, 
\begin{equation}\label{eqweak}
|\{x \in\mathbb{R}^n:\, \frac{|b(x)-b(y)|}{|x-y|^n}|\Omega(x-y)|>\lambda\}|\le \frac{\|[b,T]\|_{L^1 \to L^{1,\infty}}}{\lambda}.
\end{equation}
Fix $\varepsilon>0$ and take $K$ to be a compact subset of $S^{n-1}$ such that $\{x \in S^{n-1}:\, \Omega(x)=0\} \cap K= \emptyset$ and $|S^{n-1}\setminus K|<\varepsilon$. Call $C_{\Omega}\coloneqq \inf \{|\Omega(x)|:\, x \in K\}$ and note that $C_\Omega>0$, by the Lipschitz assumption on $\Omega$. We now define the following sets
\[\begin{split}
&\Lambda_{\lambda}(y)\coloneqq\{x \in \mathbb{R}^n:\, \frac{x-y}{|x-y|} \in K, \frac{|b(x)-b(y)|}{|x-y|^n}>\lambda\},
\\
& S_K(y)\coloneqq \{x \in \mathbb{R}^n:\, \frac{x-y}{|x-y|} \in K\}.
\end{split}\]
Note that, with the definition above, and the choice of $K$, we have for every $r>0$ that 
\begin{equation}\label{eq:smallmeasure}
|B(0,r)\setminus S_K(0)|<\epsilon r^n/n.
\end{equation}

By \ref{eqweak} and the definition of $C_{\Omega}$ we have
\[|\Lambda_{\lambda}(y)|\le \frac{1}{C_{\Omega}\lambda}.\]
Since our hypothesis is invariant under replacing $b$ by $b-c$, for any constant $c$, and since $b$ is finite almost everywhere, we can assume that $b(0)=0$ and we also have
\[|\Lambda_{\lambda}(0)|=\Big|\Big\{x \in \mathbb{R}^n:\, \frac{x}{|x|} \in K, \frac{|b(x)|}{|x|^n}>\lambda\Big\}\Big|\le \frac{1}{C_{\Omega}\lambda}.\]
Let $y \neq 0$ and $x \notin \Lambda_{1/|y|^n}(y)$, $x \in B(y, |y|1/2|b(y)|^{1/n})\cap S_K(y)$ 
\[\begin{split}
|b(x)|&\ge |b(y)|- \frac{|b(x)-b(y)|}{|x-y|^n}|x-y|^n \ge |b(y)|- \frac{1}{|y|^n}\big(|y|\frac{1}{2}|b(y)|^{1/n}\big)^n\\
	  &\ge \big(1-\frac{1}{2^{n}}\big)|b(y)|= c_n|b(y)|
\end{split}\]
for almost every $y$. Suppose that $|b(y)|>2^n$ (if we had that $|b(y)|\le 2^n$ for all $y \neq 0$ that is also a Lebesgue point then we would be done). We conclude that 
\begin{equation}\label{eq1}
\begin{split}
A(y,K)&\coloneqq |\{ x\in [B(y, |y|1/2|b(y)|^{1/n})\cap S_K(y)\cap S_K(0)] \backslash \Lambda_{1/|y|^n}(y):\, \frac{|b(y)|}{|x|^n}>\frac{1}{|y|^n}\}|
\\ 
&\le |\{x \in S_K(0):\, \frac{c_n|b(x)|}{|x|^n}> \frac{1}{|y|^n}\}|= |\Lambda_{1/(c_n|y|^n)}(0)|\le C_{\Omega}^{-1}c_n |y|^n.
\end{split}
\end{equation}
Since $|b(y)|>2^n$,
\[ |y||b(y)|^{1/n} = \frac{1}{2} |y||b(y)|^{1/n} + \frac{1}{2} |y||b(y)|^{1/n} \ge |y| + \frac{1}{2}|y||b(y)|^{1/n}.\]
This implies that $B(y, 1/2|y||b(y)|^{1/n}) \subset B(0, |y||b(y)|^{1/n})$ and so
\begin{equation}\label{eq2}
|A(y,K)| \ge|B(y, |y|1/2|b(y)|^{1/n})\cap S_K(y)|-|B(0,|y||b(y)|^{1/n})\cap S_K(0)^c|
-| \Lambda_{1/|y|^n}(y)|. 
\end{equation}
Let us observe that
\[\begin{split}
&|B(y, |y|1/2|b(y)|^{1/n})\cap S_K(y)|= |B(0, 1/2|y||b(y)|^{1/n})\cap S_K(0)|\\
&\quad =\frac{1}{2^n}|y|^n|b(y)|(|B(0,1)|-|B(0,1)\cap S_K(0)^c|)\ge \frac{1}{2^n}|y|^n|b(y)| \left(\omega_n-\frac{\varepsilon}{n}\right)
\end{split}\]
by \eqref{eq:smallmeasure}; here $\omega_n$ denotes the measure of the unit ball. 
We also have that
\[\begin{split}
|B(0,|y||b(y)|^{1/n})\cap S_K(0)^c|+| \Lambda_{1/|y|^n}(y)|&\le|y|^n|b(y)||B(0, 1)\cap S_K(0)^c|+C_{\Omega}^{-1}|y|^n\\
&\le |y|^n|b(y)| \frac{\varepsilon}{n} + C_{\Omega}^{-1}|y|^n,
\end{split}\]
here we are using \eqref{eq:smallmeasure} again. Estimate \eqref{eq2} then yields
\[
|A(y,K)| \ge |y|^n|b(y)|\left(\frac{1}{2^n}\omega_n-\frac{\varepsilon}{n}\frac{2^n+1}{2^n}\right)-C_{\Omega}^{-1}|y|^n.
\] 
Now take $\varepsilon=\frac{n}{2}\omega_n\frac{1}{2^n+1}$. Combining with the previous estimate we get that
\[c_n C|y|^n \ge \frac{\omega_n}{2^{n+1}}|y|^n|b(y)|-C_{\Omega}^{-1}|y|^n\]
and so
\[|b(y)| \le \max \Big\{2^n, \frac{2^{n+1}(c_n+1)}{C_{\Omega}\omega_n}\Big\}=\max \Big\{2^n, \frac{2}{\omega_n}(2^{n+1}-1) \frac{1}{C_{\Omega}}\Big\}\eqqcolon C(\Omega,n)\]
for almost all $y \in \mathbb{R}^n$ and thus $b$ is bounded and $\|b\|_\infty\leq C(\Omega,n)$ as desired. 
\end{proof}
Now we prove a higher dimensional analogue of  Theorem \ref{HT} for the class of singular integral operators given in \eqref{eq:generalsi}. We use a similar argument as the one given by Uchiyama in \cite{MR0467384}. In the statement and proof of the following theorem we test some endpoint inequalities for the commutator on characteristic functions, which is somewhat reminiscent of Sawyer's testing conditions, \cite{Sawyer}, characterizing the two-weight norm inequalities for the Hardy-Littlewood maximal operator.  We impose a symmetric condition on the adjoint operator; indeed, since we are assuming an endpoint estimate, we can no longer rely on duality in order to conclude the boundedness of the adjoint commutator, as for example was done in \cite{MR0467384}. Note however that for the Riesz transforms, as well as for more general odd kernels as in \eqref{eq:generalsi}, it will be enough to assume the endpoint boundedness of $[b,T]$ at the endpoint in order to conclude that $b \in \mathrm{BMO(\mathbb{R}^n)}$ (we would get the condition on the adjoint for free, since $[b,T]^*=[b,T^*]=[b,-T]$ for odd convolution kernels). 

For the statement of the theorem below we remember that $\phi_1(t)=t(1+\log^+t)$.
\begin{theorem}
Let $b$ be a locally integrable function on $\mathbb{R}^n$. If there exists a constant $B$ such that for every measurable set $E$ and $t>0$ we have that
\[|\{x \in \mathbb{R}^n:\, |[b,T]\chi_E(x)|>t\}| \le \int_{\mathbb{R}^n} \phi_1\left(\frac{B\chi_E(x)}{t}\right) dx,\]
and 
\[|\{x \in \mathbb{R}^n:\, |[b,T^*]\chi_E(x)|>t\}| \le \int_{\mathbb{R}^n} \phi_1\left(\frac{B\chi_E(x)}{t}\right) dx,\]
then $b \in \mathrm{BMO(\mathbb{R}^n)}$ and $\|b\|_{\mathrm{BMO(\mathbb{R}^n)}}\le C(\Omega, n) B$. 
\end{theorem}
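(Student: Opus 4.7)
The plan is to apply Str\"omberg's characterization from Theorem \ref{thmStromberg}: it suffices to exhibit a constant $A \lesssim_{\Omega,n} B$ such that for each cube $Q$ there is a constant $c_Q$ with $|\{x \in Q : |b(x) - c_Q| > A\}| \le |Q|/2$. I would take $c_Q$ to be a median of $b$ on a translated cube $Q'$ chosen so that the kernel $\Omega(x-y)/|x-y|^n$ has a definite sign on $Q \times Q'$, and then test the hypothesis on characteristic functions of carefully chosen subsets of $Q'$.

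First I would set up the geometry. Since $\Omega$ is continuous on $S^{n-1}$, has mean zero, and is nonzero almost everywhere, it must attain strictly positive values; the Lipschitz assumption then produces a spherical cap $\Sigma \subset S^{n-1}$ and a constant $c_\Omega > 0$ with $\Omega \ge c_\Omega$ on $\Sigma$. Fix a direction $v_0 \in \Sigma$. Given a cube $Q$ with sidelength $\ell$, define the translate $Q' \coloneqq Q - d\ell\, v_0$, where $d = d(\Omega, n)$ is taken large enough that $(x-y)/|x-y| \in \Sigma$ and $|x-y| \le R \coloneqq (d + \sqrt{n})\,\ell$ for every $x \in Q$ and $y \in Q'$. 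This forces $\Omega(x-y)/|x-y|^n \ge c_\Omega/R^n$ uniformly on $Q \times Q'$.

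Next I would let $m \coloneqq m_{Q'}(b)$ be a median of $b$ on $Q'$, select disjoint subsets $F, G \subset Q'$ with $|F| = |G| = |Q|/2$, $b \le m$ on $F$, and $b \ge m$ on $G$, and set $c_Q \coloneqq m$. For $x \in Q$ with $b(x) > m + A$ and every $y \in F$, both factors of the integrand $\Omega(x-y)(b(x) - b(y))/|x-y|^n$ are strictly positive, so integrating in $y$ over $F$ yields
\[ [b,T]\chi_F(x) \,\ge\, \frac{c_\Omega\, A\, |F|}{R^n} \,\ge\, c_1 A \]
for a constant $c_1 = c_1(\Omega, n) > 0$. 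Inserting this into the hypothesized modular inequality gives
\[ |\{x \in Q : b(x) > m + A\}| \,\le\, |F|\,\phi_1\!\left(\frac{B}{c_1 A}\right), \]
and choosing $A = C(\Omega, n)\, B$ with $C$ large (using $\phi_1(t) \to 0$ as $t \to 0^+$) makes this $\le |Q|/4$. The mirror argument with $\chi_G$ handles $\{x \in Q : b(x) < m - A\}$, since now the integrand is negative with $[b,T]\chi_G(x) \le -c_1 A$; summing, $|\{x \in Q : |b(x) - m| > A\}| \le |Q|/2$, which is the $\mathrm{BMO}_{1/2}$ condition at $Q$, and Theorem \ref{thmStromberg} upgrades it to $\|b\|_{\mathrm{BMO}} \le C(\Omega,n)\, B$.

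The main obstacle is the geometric setup: because $\Omega$ changes sign on $S^{n-1}$, the kernel has no useful lower bound on pairs within $Q$ itself, and the translation distance $d$ must be tuned so that every direction $(x-y)/|x-y|$ with $x \in Q$ and $y \in Q'$ lies in the cap $\Sigma$ where $\Omega$ is quantitatively positive, while still keeping $|x-y| \asymp \ell$; tracking the dependence of $c_1$ and $R$ on $\Omega$ and $n$ through this construction is where the technical care lies. The hypothesis on $[b, T^*]$ does not enter the argument above, but can be invoked in an entirely parallel way using the companion cube $Q + d\ell\, v_0$, on which $\Omega(y-x) \ge c_\Omega$ and hence the kernel of $T^*$ is positive; one could split the two median estimates between $T$ and $T^*$ if a more symmetric presentation were preferred.
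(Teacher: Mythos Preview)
Your argument is correct, and it takes a genuinely different route from the paper. The paper adapts Uchiyama's two--step strategy: it fixes a cube $Q_1$, builds the test function $\psi=\chi_{E_2}-\chi_{E_1}$ from the median decomposition, estimates $[b,T]\psi$ on a far--away cone $G=\{|x|>A_2,\ x'\in\Sigma\}$ to produce a set $F\subset G$ where $|b|$ is large with $|F|\gtrsim M$, and then applies the hypothesis for $[b,T^*]$ to $g=\sgn(b)\chi_F$ back on $Q_1$ to force $M\le C(\Omega,n)$. Both hypotheses are genuinely used in that scheme.

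Your approach instead runs the Janson ``separated cubes'' trick through Str\"omberg's $\mathrm{BMO}_{1/2}$ characterization: translate $Q$ to $Q'=Q-d\ell v_0$ so that $\Omega((x-y)/|x-y|)\ge c_\Omega$ on $Q\times Q'$, take the median on $Q'$, and test only on $\chi_F$, $\chi_G$. This is shorter, avoids the cone/duality bootstrap entirely, and---as you observe---never invokes the $[b,T^*]$ hypothesis; the mirror estimate for $\{b<m-A\}$ already follows from $[b,T]\chi_G$ because the kernel keeps its sign while $b(y)-b(x)$ flips. So your proof in fact establishes a slightly stronger statement than the theorem as written. One cosmetic point: with the paper's sign convention $[b,T]f=T(bf)-bTf$, the integrand on $F$ is $(b(y)-b(x))\Omega(x-y)/|x-y|^n$, so your displayed inequality should read $[b,T]\chi_F(x)\le -c_1A$; this is harmless since only $|[b,T]\chi_F|$ enters the hypothesis.
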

\begin{proof}
As we did in the proof of theorem \ref{weak(1,1)}, we can assume $B=1$. Define $M(b,Q)\coloneqq \inf\limits_{c \in \mathbb{R}} \frac{1}{|Q|} \int_{Q} |b(y)-c| dy$. We want to prove 
\begin{equation}\label{eqM}
 \sup_Q M(b,Q) \le C(\Omega, n).\end{equation}
By translation and dilation invariance it suffices to prove \eqref{eqM} for the cube $Q_1= \{x \in \mathbb{R}^n:\, |x| < (2\sqrt n)^{-1}\}. $ 

Let $M\coloneqq M(b, Q_1)=|Q_1|^{-1} \int_{Q_1} |b(y)-m_{Q_1}(b)| dy$, where $m_{Q_1}(b)$ is a median of $b$ over $Q_1$. Since $[T,b-m_{Q_1}(b)]=[T, b]$ we may assume that $m_{Q_1}(b)=0$. This means that we can find disjoint subsets of $Q_1$, $E_1 \supset  \{ x \in Q_1:\, b(x)<0\}$ and $E_2 \supset \{x \in Q_1:\, b(x)>0\}$ of equal measure. Define $\psi\coloneqq  \chi_{E_2}-\chi_{E_1}$.  
Then $\psi$ satisfies: $\|\psi\|_{\infty}=1,\, \supp \psi \subset Q_1,$
\[\begin{split}
\int \psi (x) dx=0,  \quad
 \psi(x)b(x) \ge 0, \quad
 \text{and} \quad |Q_1|^{-1} \int \psi(x)b(x) dx=M.
\end{split}\]
Take $\Sigma \subset S^{n-1}$ a compact set such that $\Omega(x)>0$ for every $x \in \Sigma$. From now on, we will denote by $A_i$  constants depending only on the dimension $n$ and the kernel $\Omega$. Take $A_1$ such that for every $x \in \Sigma$ and $z \in S^{n-1}$ satisfying $|x-z|< A_1$, we have $|\Omega(x)-\Omega(z)| < 1/2 \Omega(x)$. Denote $x'=x/|x|$. Then, for $x \in G\coloneqq \{x \in \mathbb{R}^n:\, |x|>A_2=2A_1^{-1}+1 \text{ and } x' \in \Sigma\}$, 
\[\begin{split}
|[b,T]\psi(x)|&=|T(b\psi)(x)-b(x)T\psi(x)|\ge |T(b\psi)(x)|-|b(x)||T\psi(x)|.
		      \end{split}\]
We bound these two terms separately. We deal with the first one, 
\[|T(b\psi)(x)|=\left|\pv \int_ {Q_1} \frac{\Omega(x-y)}{|x-y|^{n} }b(y)\psi(y) dy\right|.\]
Observe that $|(x-y)'-x'|<A_1$ and so $\Omega(x-y)>1/2 \Omega(x)$, which in particular means that $\Omega(x-y)$ is positive. Since we already have that $b(y)\psi(y)$ is nonnegative and we are taking $x \in G$, we get
\[
\left|\pv \int_{Q_1} \frac{\Omega(x-y)}{|x-y|^{n} }b(y)\psi(y) dy\right| =  \int_{Q_1} \frac{\Omega(x-y)}{|x- y|^n} |b(y)| dy\ge A_3 M |x|^{-n}.
\]
Now we have to deal with $|T\psi(x)|$. Since we have that $\int \psi =0$ we can estimate
\[\begin{split}
\left| \pv \int_{Q_1} \frac{\Omega(x-y)}{|x-y|^{n}} \psi(y) dy\right|&= \left|  \int_ {Q_1} \left(\frac{\Omega(x-y)}{|x-y|^n} - \frac{\Omega(x)}{|x|^n} \right)\psi(y) dy\right| \\
&\le  \int_{Q_1}\left|\frac{\Omega(x-y)}{|x-y|^n}-\frac{\Omega(x)}{|x|^n}\right|  dy\\
&\le A_4 |x|^{-n-1}.
\end{split}\]
Then, we have
\[|[b,T]\psi(x)| \ge A_3 M |x|^{-n}-A_4 |b(x)||x|^{-n-1}.\]
Letting $F\coloneqq \{ x\in G:\, |b(x)|>(MA_3/2A_4)|x| \text{ and } |x|<M^{1/n}\}$ we have
\[\begin{split}
|&\{ x \in \mathbb{R}^n:\, |[b,T]\psi(x)| >A_3/2\}| \ge |\{ x \in (G\backslash F) \cap \{|x|<M^{1/n}\}:\, |[b,T]\psi(x)| >A_3/2\}|\\
 &\ge |\{ x \in (G\backslash F) \cap \{|x|<M^{1/n}\}:\, 2^{-1}A_3 M |x|^{-n} >A_3/2\}|\\
 &=|(G\backslash F)\cap \{|x|< M^{1/n}\}|=A_5(M-A_2^n)-|F|
 \end{split}\]
By our assumption, we have that
\[\begin{split}
|\{ x \in \mathbb{R}^n:\, |[b,T]\psi(x)| >A_3/2\}| \le \int_{Q_1} \phi(2A_3^{-1}|\psi(x)|) dx\le |Q_1|\phi(2A_3^{-1}).
\end{split}\]
Then
\[|F|\ge A_5(M-A_2^n)-\phi(2A_3^{-1})|Q_1|\ge A_5M/2\]
by assuming, as we may, that $M$ is large enough.

Let $g(x)\coloneqq \sgn(b(x))\chi_F(x)$ and $T^*$ be the adjoint operator of $T$. Then, for $x \in Q_1$, 
\[|[T^*,b]g(x)|\ge  |T^*(bg)(x)|-|b(x)||T^*(g)(x)|.\]
By the definition of $F$, we have
\[\begin{split}
|T^*(bg)(x)|&=\Big|\pv \int_{\mathbb{R}^n} \Omega(y-x)|x-y|^{-n} b(y)g(y)dy\Big|
\\
			&= \int_F \Omega(y-x)|x-y|^{-n} |b(y)| dy.
\end{split}\]
Note that $y \in F$ means that $|y|\le M^{1/n}$ and thus
\[\begin{split}
 |T^*(bg)(x)|& \ge A_6  \int_F \frac{MA_3}{2A_4} |y|^{-n+1} dy\\
			 & \ge A_6A_3(2A_4)^{-1}A_2 M^{1/n}|F|\ge A_7 M^{1+1/n}.
\end{split}\]
For the second summand in the estimate for $[T^*,b]$ we have for $x\in Q_1$ that
\[\begin{split}
|T^*g(x)|&\le \Big|\pv \int_F \Omega(y-x)|x-y|^{-n} g(y) dy\Big| \\
						&\le \int_F |\Omega(y-x)| |x-y|^{-n} dy\\
						&\leq \|\Omega\|_{L^\infty(S^{n-1})} \int_F |y-x|^{-n} dy
						\\
						&\le  \|\Omega\|_{L^\infty(S^{n-1})} \int_{A_2 \le |y|\le M^{1/n}} \frac{1}{|y|^{n}-2^{-n}} \le A_8 \log M.
\end{split}\]
Then, for $x \in Q_1$, \[|[T^*,b]g(x)|\ge A_7 M^{1+1/n}-A_8 |b(x)|\log M.\] 
By our assumption on $T^*$ we can now conclude that
\[\begin{split}
&|\{x \in \mathbb{R}^n:\, |[T^*,b] g(x)|\ge (A_7 /2) M^{1+1/n}\}|\le \int_{\mathbb{R}^n} \phi\left(\frac{|g(x)|}{(A_7/2)M^{1+1/n}}\right) dx \\
&=\int_F \phi([(A_7/2)M^{1+1/n}]^{-1}) dx =|F|\phi(A_9M^{-1/n-1})\\
&\le M \phi(A_9M^{-1/n-1})=A_9M^{-1/n},
\end{split}\]
where the last inequality follows by taking $M$ large enough, since $\log^+ t$ vanishes for $|t|<1$.  On the other hand, 
\[\begin{split}
|&\{x \in \mathbb{R}^n:\, [T^*,b]g(x)|\ge A_7M^{1+1/n}\}| \ge |\{x \in Q_1:\, |[T^*,b]g(x)|\ge A_7/2M^{1+1/n}\}|\\
											   & \ge |\{x \in Q_1:\, A_7M^{1+1/n}-A_8\log M|b(x)|\ge A_7/2M^{1+1/n}\}|\\
											   &= |\{x \in Q_1:\, |b(x)|\le A_{10}M^{1+1/n}(\log M)^{-1}\}|\\
											   &=|Q_1|-|\{x \in Q_1:\, |b(x)|>A_{10}M^{1+1/n}(\log M)^{-1}\}|\\
											   &\geq |Q_1|-A_{10}|Q_1|\log M M^{-1-1/n} |Q_1|^{-1}\int_{Q_1} |b(x)| dx\\
											   &=|Q_1|-A_{10} |Q_1|\log M M^{-1/n}\ge A_{11},
											  \end{split}\]
as $M^{-1/n}\log M $ is bounded for every $M> e^{1/n}$. Then, we have that
\[M\le (A_9/A_{11})^n.\]
Summarizing the estimates above, we have proved that
\[M \le \max\Big\{\frac{2}{A_5}(1+A_5A_2^n), A_9^{-1/n-1}, e^{1/n}, (A_9/A_{11})^n\Big\} \eqqcolon C(\Omega, n),\]
as all the constants $A_i$ depend only on $\Omega$ and $n$. 
\end{proof}

\section{Acknowledgments}

This work is part of the author's PhD Thesis at the University of the Basque Country.  Right before the submission of the current paper we discovered that some of our results were independently discovered in \cite{Guoetal} and with different method of proof. 

The author would like to thank the anonymous referee for insightful comments that improved the presentation.

%%%%%%%%%%%%%%%%%%%%%%%%%%%%%% SECTION  SECTION SECTION
\begin{bibsection}
\begin{biblist}

\bib{MR0412721}{article}{
   author={Coifman, R. R.},
   author={Rochberg, R.},
   author={Weiss, Guido},
   title={Factorization theorems for Hardy spaces in several variables},
   journal={Ann. of Math. (2)},
   volume={103},
   date={1976},
   number={3},
   pages={611--635},
   issn={0003-486X},
   review={\MR{0412721}},
   doi={10.2307/1970954},
}
\bib{FergLacey}{article}{
   author={Ferguson, Sarah H.},
   author={Lacey, Michael T.},
   title={A characterization of product BMO by commutators},
   journal={Acta Math.},
   volume={189},
   date={2002},
   number={2},
   pages={143--160},
   issn={0001-5962},
   review={\MR{1961195}},
   doi={10.1007/BF02392840},
}
\bib{MR1785283}{article}{
   author={Ferguson, Sarah H.},
   author={Sadosky, Cora},
   title={Characterizations of bounded mean oscillation on the polydisk in
   terms of Hankel operators and Carleson measures},
   journal={J. Anal. Math.},
   volume={81},
   date={2000},
   pages={239--267},
   issn={0021-7670},
   review={\MR{1785283}},
   doi={10.1007/BF02788991},
}

\bib{Guoetal}{article}{
	Author = {Guo, Weichao},
	Author = {Lian, Jiali},
	Author = {Wu, Huoxiong},
	Eprint = {1709.08279},
	Month = {09},
	Title = {The unified theory for the necessity of bounded commutators and applications},
	Url = {https://arxiv.org/abs/1709.08279},
	Year = {2017},
	Bdsk-Url-1 = {https://arxiv.org/abs/1709.08279}}

\bib{MR524754}{article}{
   author={Janson, Svante},
   title={Mean oscillation and commutators of singular integral operators},
   journal={Ark. Mat.},
   volume={16},
   date={1978},
   number={2},
   pages={263--270},
   issn={0004-2080},
   review={\MR{524754}},
   doi={10.1007/BF02386000},
}

\bib{MR0190905}{article}{
   author={John, F.},
   title={Quasi-isometric mappings},
   conference={
      title={Seminari 1962/63 Anal. Alg. Geom. e Topol., vol. 2, Ist. Naz.
      Alta Mat.},
   },
   book={
      publisher={Ediz. Cremonese, Rome},
   },
   date={1965},
   pages={462--473},
   review={\MR{0190905}},
}
\bib{MR1373281}{article}{
   author={Li, Song-Ying},
   title={Characterization of the boundedness for a family of commutators on
   $L^p$},
   journal={Colloq. Math.},
   volume={70},
   date={1996},
   number={1},
   pages={59--71},
   issn={0010-1354},
   review={\MR{1373281}},
   doi={10.4064/cm-70-1-59-71},
}
\bib{OuPetermichlStrouse}{article}{
   author={Ou, Yumeng},
   author={Petermichl, Stefanie},
   author={Strouse, Elizabeth},
   title={Higher order Journ\'e commutators and characterizations of
   multi-parameter BMO},
   journal={Adv. Math.},
   volume={291},
   date={2016},
   pages={24--58},
   issn={0001-8708},
   review={\MR{3459014}},
   doi={10.1016/j.aim.2015.12.029},
}
\bib{MR1317714}{article}{
   author={P\'erez, Carlos},
   title={Endpoint estimates for commutators of singular integral operators},
   journal={J. Funct. Anal.},
   volume={128},
   date={1995},
   number={1},
   pages={163--185},
   issn={0022-1236},
   review={\MR{1317714}},
   doi={10.1006/jfan.1995.1027},
}
\bib{MR1827073}{article}{
   author={P\'erez, Carlos},
   author={Pradolini, Gladis},
   title={Sharp weighted endpoint estimates for commutators of singular
   integrals},
   journal={Michigan Math. J.},
   volume={49},
   date={2001},
   number={1},
   pages={23--37},
   issn={0026-2285},
   review={\MR{1827073}},
   doi={10.1307/mmj/1008719033},
}

\bib{Sawyer}{article}{
   author={Sawyer, Eric T.},
   title={A characterization of a two-weight norm inequality for maximal
   operators},
   journal={Studia Math.},
   volume={75},
   date={1982},
   number={1},
   pages={1--11},
   issn={0039-3223},
   review={\MR{676801}},
}

\bib{MR529683}{article}{
   author={Str\"omberg, Jan-Olov},
   title={Bounded mean oscillation with Orlicz norms and duality of Hardy
   spaces},
   journal={Indiana Univ. Math. J.},
   volume={28},
   date={1979},
   number={3},
   pages={511--544},
   issn={0022-2518},
   review={\MR{529683}},
   doi={10.1512/iumj.1979.28.28037},
}
\bib{MR869816}{book}{
   author={Torchinsky, Alberto},
   title={Real-variable methods in harmonic analysis},
   series={Pure and Applied Mathematics},
   volume={123},
   publisher={Academic Press, Inc., Orlando, FL},
   date={1986},
   pages={xii+462},
   isbn={0-12-695460-7},
   isbn={0-12-695461-5},
   review={\MR{869816}},
}

   \bib{MR0467384}{article}{
   author={Uchiyama, Akihito},
   title={On the compactness of operators of Hankel type},
   journal={T\^ohoku Math. J. (2)},
   volume={30},
   date={1978},
   number={1},
   pages={163--171},
   issn={0040-8735},
   review={\MR{0467384}},
   doi={10.2748/tmj/1178230105},
}
\bib{Uchiyama2}{article}{
   author={Uchiyama, Akihito},
   title={A constructive proof of the Fefferman-Stein decomposition of BMO
   $({\bf R}^{n})$},
   journal={Acta Math.},
   volume={148},
   date={1982},
   pages={215--241},
   issn={0001-5962},
   review={\MR{666111}},
   doi={10.1007/BF02392729},
}

\end{biblist}
\end{bibsection}

\end{document}